\newtheorem{lemma}{Lemma}[section]
\newtheorem{Prop}[lemma]{Proposition}
\newtheorem{Thm}[lemma]{Theorem}
\theoremstyle{definition}
\newtheorem{example}[lemma]{Example}
\theoremstyle{remark}
\numberwithin{equation}{section}
\def\bmat{\begin{pmatrix}}
\def\emat{\end{pmatrix}}
\def\-{\smallsetminus}
\def\~{\widetilde}
\def\phi{\varphi}
\def\deg{\text{deg }}
\def\<{\langle}
\def\>{\rangle}
\def\Gr{\operatorname {Gr}}
\def\GrRep{\operatorname {GrRep}}
\def\Fdim{\operatorname {Fdim}}
\def\QGr{\operatorname {QGr}}
\def\id{\operatorname {id}}
\def\Proj{\operatorname {Proj}}
\def\Qcoh{\operatorname {Qcoh}}
\def\Ker{\operatorname {Ker}}
\def\Coker{\operatorname {Coker}}
\def\11{\text{\bf 1}}
\def\NN{{\mathbb N}}
\def\ZZ{{\mathbb Z}}
\def\sM{{\mathscr M}}
\def\sN{{\mathscr N}}
\title[Weighted path algebras]{Category equivalences involving graded modules over quotients of weighted path algebras}
\begin{document}

\author{Cody Holdaway}

\address{Department of Mathematics, Box 354350, Univ.
Washington, Seattle, WA 98195}

\email{codyh3@math.washington.edu}

\keywords{quotient category; representations of quivers; path algebras.}

%\subjclass{05C20, 16B50, 16G20, 16W50}
\subjclass[2010]{14A22, 16B50, 16G20, 16W50}

\begin{abstract}
Let $k$ be a field, $Q$ a finite directed graph, and $kQ$ its path algebra. Make $kQ$ an $\NN$-graded algebra by assigning each arrow a positive degree. Let $I$ be a homogeneous ideal in $kQ$ and write $A=kQ/I$. Let $\QGr A$ denote the quotient of the category of graded right $A$-modules modulo the Serre subcategory consisting of those graded modules that are the sum of their finite dimensional submodules. This paper shows there is a finite directed graph $Q'$ with all its arrows placed in degree 1 and a homogeneous ideal $I'\subset kQ'$ such that $\QGr A \equiv \QGr kQ'/I'$. This is an extension of a result obtained by the author and Gautam Sisodia in \cite{HG}.
\end{abstract}

\maketitle

\pagenumbering{arabic}

\setcounter{section}{0}

\section{Introduction}

\subsection{} In noncommutative projective geometry, there seems to be a consensus that being generated in degree $1$ is ``good.'' 

For example, consider Serre's Theorem: If $A$ is a locally finite commutative graded $k$-algebra generated in degree $1$, then $\QGr A \equiv \Qcoh(\Proj A)$. Serre's Theorem can fail if the algebra is not generated in degree $1$, a counterexample being the polynomial algebra $k[x,y]$ with $\deg x=1$ and $\deg y=2$.  

Another nice theorem that uses generation in degree $1$ is Verevkin's result about the equivalence
$$
\QGr A \equiv \QGr A^{(d)}
$$
where $A^{(d)}$ is the $d$-th Veronese subalgebra of $A$ \cite{ABV}.

Given a graded algebra $A$, is it possible to find a graded algebra $A'$ generated in degree one such that 
$$
\QGr A \equiv \QGr A'?
$$
In \cite{HG} it was shown that the answer is yes when $A$ is a path algebra or a monomial algebra. This article extends these results to include the case where $A$ is any quotient of a path algebra by a finitely generated homogeneous ideal.

Lets consider the example with the commutative polynomial algebra $A=k[x,y]$ where $\deg x=1$ and $\deg y=2$. $A$ is the quotient of the path algebra $kQ$ modulo the ideal $I=(xy-yx)$ where $Q$ is the quiver
$$
\UseComputerModernTips
\xymatrix{ {\bullet} \ar@(ul,dl)_{x} \ar@(ur,dr)^{y} & {.} }
$$

Let $Q'$ be the quiver:
$$
\UseComputerModernTips
\xymatrix{ {\bullet} \ar@(ul,dl)_{x} \ar@/^1pc/[r]^{y'} & {\bullet} \ar@/^1pc/[l]^{y''}}
$$
and give $kQ'$ the grading where all arrows have degree $1$. It is shown in \cite{HG} that $\QGr kQ \equiv \QGr kQ'$. That is, the noncommutative projective schemes $\Proj_{nc}kQ$ and $\Proj_{nc}kQ'$ are isomorphic.

The scheme $\Proj_{nc}k[x,y]$ is a ``closed subsecheme'' of $\Proj_{nc}kQ$ defined by the ideal $I=(xy-yx)$. Since $\Proj_{nc}kQ \cong \Proj_{nc}kQ'$, the space $\Proj_{nc}k[x,y]$ should correspond to some ``closed subscheme'' of $\Proj_{nc}kQ'$. 

One guess might be that $\Proj_{nc}k[x,y]$ corresponds to the closed subscheme of $\Proj_{nc}kQ'$ cut out by the ideal $I'=(xy'y''-y'y''x)$. The methods of this paper show this is true. More explicitly, the main result shows
$$
\QGr k[x,y]\equiv \QGr kQ'/I'.
$$ 

This equivalence is rather interesting. The algebra $k[x,y]$ is a connected Noetherian domain while $kQ'/I'$ is none of these. However, $kQ'/I'$ is generated in degree $1$. Thus, in trying to understand $\QGr k[x,y]$, one can use whichever algebra is most suited to the question at hand.

The principal result of this paper is:

\begin{Thm} \label{thm.main}
Let $Q$ be a weighted quiver and $I$ a finitely generated homogeneous ideal in $kQ$. There is a quiver $Q'$ with all arrows having degree $1$, a finitely generated homogeneous ideal $I'\subset kQ'$, and an equivalence of categories 
$$
F:\QGr kQ/I \equiv \QGr kQ'/I'
$$
which respects shifting. That is, $F(\sM(1))\cong F(\sM)(1)$ for all $\sM \in \QGr kQ/I$.
\end{Thm}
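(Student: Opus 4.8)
The plan is to realize $A'=kQ'/I'$ together with a distinguished homogeneous idempotent $e$ whose corner algebra $e(A')e$ is isomorphic to $A=kQ/I$ and for which $A'/A'eA'$ is finite dimensional; the equivalence will then follow from the fact that, modulo the torsion subcategory defining $\QGr$, the corner functor $\sM\mapsto\sM e$ is an equivalence $\QGr A'\xrightarrow{\sim}\QGr e(A')e$.

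Following \cite{HG}, I would build $Q'$ by subdividing each arrow $a\colon i\to j$ of degree $d$ into a path $a_1a_2\cdots a_d$ of $d$ arrows of degree $1$ through $d-1$ new interior vertices, leaving degree-$1$ arrows unchanged. Let $e\in kQ'$ be the sum of the vertex idempotents at the vertices inherited from $Q$. Since the only arrows among the new vertices lie inside a single subdivided arrow, any path of $Q'$ that starts and ends at inherited vertices is uniquely a concatenation of complete subdivided arrows and hence corresponds to a unique path of $Q$ of the same degree, giving a graded isomorphism $\iota\colon kQ\xrightarrow{\sim} e(kQ')e$. Taking homogeneous generators $r_1,\dots,r_m$ of $I$, I set $I'$ to be the two-sided ideal of $kQ'$ generated by $\iota(r_1),\dots,\iota(r_m)$ — finitely generated and homogeneous — and put $A'=kQ'/I'$. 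Then $e(A')e\cong A$: it suffices to prove $eI'e=\iota(I)$, and while $\iota(I)\subseteq eI'e$ is clear, conversely any element of $I'$ is a sum of terms $b\,\iota(r_\ell)\,b'$ with $b,b'\in kQ'$, and since $\iota(r_\ell)=e\,\iota(r_\ell)\,e$ one regroups
\[ e\,b\,\iota(r_\ell)\,b'\,e=(ebe)\,\iota(r_\ell)\,(eb'e)\in e(kQ')e\cdot\iota(I)\cdot e(kQ')e=\iota(I), \]
using that $\iota(I)$ is an ideal of $e(kQ')e$; summing yields $eI'e\subseteq\iota(I)$, hence equality.

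For the descent to the quotient category, the point is that the new vertices contribute only boundedly: paths of $Q'$ avoiding the inherited vertices lie inside a single subdivided arrow, so they have bounded length and $kQ'/(kQ')e(kQ')$ is finite dimensional, whence so is its quotient $A'/A'eA'$. Consequently any graded $A'$-module $N$ with $Ne=0$ is annihilated by $A'eA'$, so it is a module over the finite dimensional algebra $A'/A'eA'$ and therefore a sum of its finite dimensional submodules, i.e.\ torsion. Thus the corner functor $(-)e\colon\Gr A'\to\Gr e(A')e=\Gr A$ kills only torsion modules, and the standard localization argument as in \cite{HG} shows it induces an equivalence $\QGr A'\xrightarrow{\sim}\QGr A$. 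Since $e$ is homogeneous of degree $0$ one has $(\sM(1))e=(\sM e)(1)$, so the equivalence respects the shift; reading through its inverse gives the functor $F$ of the theorem.

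I expect the genuinely delicate point to be this last descent rather than the corner computation: one must show that passing to $\QGr$ upgrades the merely one-sided corner functor to an equivalence, i.e.\ that its kernel as well as the failures of fullness and essential surjectivity are all absorbed into the torsion subcategory. This is exactly where the finite dimensionality of $A'/A'eA'$ must be used carefully, since it is what guarantees that imposing the relations $I'$ does not create torsion-free discrepancies between $A'$ and $A'eA'$; verifying this compatibility of the relations with the idempotent localization is the crux that lifts the relation-free equivalence of \cite{HG} to the present setting.
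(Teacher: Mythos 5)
Your construction of $(Q',I')$ is the same as the paper's end product (the paper subdivides one arrow at a time and inducts on the weight discrepancy $D(Q)$; iterating its step yields exactly your fully subdivided quiver, and its ideal $I'=(f(\rho_1),\dots,f(\rho_n))$ is your $(\iota(r_1),\dots,\iota(r_m))$), and your corner computations are correct: $\iota\colon kQ\cong e(kQ')e$, $eI'e=\iota(I)$, hence $e(kQ'/I')e\cong kQ/I$, and $A'/A'eA'$ is finite dimensional. The gap is in the final step, where the equivalence of quotient categories is actually produced. What you verify is that the corner functor $j^*=(-)e$ has kernel contained in $\Fdim A'$ (that is, $Ne=0$ implies $N$ torsion). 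What faithfulness of the induced functor $\QGr A'\to\QGr A$ requires is the strictly stronger statement that $Ne\in\Fdim e(A')e$ implies $N\in\Fdim A'$: a morphism $\phi$ in $\QGr A'$ dies under the induced functor as soon as $(\im\phi)e$ is \emph{torsion}, not zero, so you must show $j^*$ \emph{reflects} torsion. That implication is not formal and does not follow from finite dimensionality of $A'/A'eA'$ alone by the argument you give; it needs a structural fact you never establish, namely that $A'e$ is finitely generated (in fact free, with basis $e$ together with the ``tail'' completions $a_{i+1}\cdots a_d$ of the partial arrows) as a right $e(A')e$-module, and symmetrically for $eA'$ as a left module. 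Only then can one argue: $Ke$ torsion forces $KA'e$ finite dimensional, and then, using $A'_{\ge N}\subseteq A'eA'$, every cyclic submodule of $K$ is finite dimensional.

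The fullness and essential surjectivity are likewise deferred to ``the standard localization argument as in \cite{HG},'' but \cite{HG} contains no corner-functor lemma to invoke: its localization argument (reproduced in this paper) is built on the explicitly defined adjoint pair $(F,G)$ for a single arrow splitting, whose counit is computed by hand and then fed into Popescu's theorem \cite[Thm.\ 4.9, p.\ 180]{Pop}. To run the analogous argument in your setting you must supply the adjunction yourself: the natural candidate is the left adjoint $j_!=-\otimes_{e(A')e}eA'$ of $j^*$. Your two observations do yield the counit analysis (the cokernel $M/MeA'$ and the kernel of $j_!j^*M\to M$ are both killed by $e$, hence torsion, and the unit is an isomorphism), but before Popescu's theorem applies you must additionally prove that $j_!$ is exact (i.e.\ $eA'$ is flat, in fact free, over $e(A')e$) and that $j_!$ carries $\Fdim e(A')e$ into $\Fdim A'$, which is exactly the missing torsion-reflection statement in adjoint form; without it, $\Ker(\pi^*j_!)$ cannot be identified with $\Fdim kQ/I$. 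These verifications are precisely the content that the paper's one-arrow-at-a-time induction delivers cheaply, since there $F$, $G$, the unit, and the counit are all explicit. So the skeleton of your approach is viable and genuinely different, but as written the crux you yourself flag as delicate is asserted, not proved.
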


\subsection{Notation and definitions} \label{sec.def} 
Throughout, $Q=(Q_0,Q_1,s,t)$ will always denote a finite quiver, i.e., a finite directed graph. The set $Q_0$ is called the vertex set, $Q_1$ the arrow set and $s,t:Q_1\to Q_0$ will be the source and target maps respectively. Given a field $k$, the path algebra $kQ$ is the algebra with basis consisting of all paths in $Q$, including a trivial path $e_v$ at each vertex $v$.

Given two paths $p=a_1\cdots a_n$ and $q=b_1\cdots b_m$, the product $pq$ is the path $a_1\cdots a_nb_1\cdots b_m$ if $t(a_n)=s(b_1)$ and is zero otherwise.

Call the pair $(Q,\deg)$ a {\it weighted quiver} if $Q$ is a finite quiver and $\deg:Q_1 \to \NN_{>0}$. Usually, the $\deg$ part of the notation $(Q,\deg)$ will be dropped.

A weighted quiver determines an $\NN$-graded path algebra $kQ$ where the degree of the arrow $a$ is $\deg(a)$ and the trivial paths have degree zero. The term {\it weighted path algebra} will mean the path algebra of a weighted quiver. The term path algebra will always mean the arrows have degree $1$. 

Given an $\NN$-graded $k$-algebra $A$, $\Gr A$ will denote the category of $\ZZ$-graded right $A$ modules with degree preserving homomorphisms. $\Fdim A$ will denote the localizing subcategory of $\Gr A$ consisting of all graded modules which are the sum of their finite-dimensional submodules. The quotient of $\Gr A$ by $\Fdim A$ is denoted $\QGr A$ and the canonical quotient functor will be denoted
$$
\pi^*: \Gr A \to \QGr A.
$$

The functor $\pi^*$ is exact and the subcategory $\Fdim A$ is localizing, that is, $\pi^*$ has a right adjoint which will be denoted $\pi_*$. 

\section{The category of graded representations with relations.}

Associated to a weighted quiver $Q$ is the category of graded representations $\GrRep Q$. A graded representation is the data $M=(M_v,M_a)$ where for each vertex $v$, $M_v$ is a $\ZZ$-graded vector space over $k$ ($k$ is in degree zero) and for each arrow $a$, $M_a:M_{s(a)}\to M_{t(a)}$ is a degree $\deg(a)$ linear map.

A morphism $\phi:M \to N$ is a collection of degree $0$ linear maps $\phi_v:M_v\to N_v$ for each vertex $v$ such that for each arrow $a\in Q_1$, the diagram
$$
\UseComputerModernTips
\xymatrix{ {M_{s(a)}} \ar[r]^{M_a} \ar[d]_{\phi_{s(a)}} & {M_{t(a)}} \ar[d]^{\phi_{t(a)}}\\
           {N_{s(a)}} \ar[r]_{N_a} & {N_{t(a)}} }
$$
commutes.

The categories $\Gr kQ$ and $\GrRep Q$ are equivalent. An explicit equivalence is given by sending a graded module $M$ to the data $(Me_v,M_a)$ where $M_a:Me_{s(a)}\to Me_{t(a)}$ is the degree $\deg(a)$ linear map induced by the action of $a$. 

If $p=a_1\cdots a_m$ is a path in $Q$, then given any graded representation $(M_v,M_a)$, $p$ determines a degree $\deg(p)$ linear map $M_p:M_{s(a_1)}\to M_{t(a_m)}$ which is the composition
$$
M_p=M_{a_m}\circ \cdots \circ M_{a_1}.
$$
Given a linear combination $\rho=\sum \alpha_ip_i$, where $\alpha_i\in k$ and the $p_i$ are paths in $Q$ with the same source and target, we get a linear map 
$$
M_{\rho}=\sum \alpha_iM_{p_i}.
$$

Let $A=kQ/I$ be a weighted path algebra modulo an ideal $I$ generated by a finite number of homogeneous elements. Because of the idempotents $e_v$, we can write 
$$
I=(\rho_1,\ldots,\rho_n)
$$
where $\rho_i$ is a linear combination of paths of the same degree all of which have the same source and target.

Let $\GrRep (Q,\rho_1,\ldots,\rho_n)$ denote the full subcategory of $\GrRep Q$ consisting of all the graded representations $(M_v,M_a)$ such that $M_{\rho_i}=0$ for all $i=1,\ldots,n$. The equivalence $\Gr kQ \equiv \GrRep Q$ induces an equivalence $\Gr kQ/I \equiv \GrRep (Q,\rho_1,\ldots,\rho_n)$. From now on, the categories $\Gr kQ/I$ and $\GrRep (Q,\rho_1,\ldots,\rho_n)$ will be identified. 

\section{Proof of Theorem \ref{thm.main}} 

\subsection{} \label{sec.secmain}

The proof of Theorem \ref{thm.main} follows section 3 in \cite{HG} very closely. The details, with the appropriate modifications for the more general case, are reproduced here for convenience of the reader. 

Given a weighted quiver $Q$, define the {\it weight discrepancy} to be the non negative integer
$$
D(Q):=\left(\sum_{a\in Q_1}\deg(a)\right)-|Q_1|.
$$ 
Note that $D(Q)=0$ if and only if each arrow in $Q$ has degree $1$. The proof of Theorem \ref{thm.main} will be based on induction on $D(Q)$.

Let $Q$ be a weighted quiver and suppose $b$ is an arrow with $\deg(b)>1$. Define a new quiver $Q'$ from $Q$ by declaring
\begin{eqnarray*}
Q'_0&:=& Q_0 \sqcup \{z\} \\
Q'_1&:=& (Q_1\-\{b\}) \sqcup \{b':s(b) \to z, b'':z\to t(b)\}.
\end{eqnarray*}
Make $Q'$ a weighted quiver by letting each arrow in $Q'_1\-\{b',b''\}$ have the same degree as it had in $Q_1$ and letting $\deg(b')=1$ and $\deg(b'')=\deg(b)-1$. From the construction of $Q'$ it follows that
$$
D(Q')=D(Q)-1.
$$

\begin{example} \label{ex.ex1}
Let $Q$ be the quiver
$$
\UseComputerModernTips
\xymatrix{ {\bullet} \ar@(ul,dl)_{a} \ar@/^1pc/[r]^{b} \ar@/_1pc/[r]_{c} & {\bullet} \ar@(ur,dr)^{d} }
$$
with $\deg(b)>1$. The associated quiver $Q'$ is 
$$
\UseComputerModernTips
\xymatrix{ {\bullet}\ar@(ul,dl)_{a} \ar@/_0.5pc/[rr]_{c} \ar@/^1pc/[r]^{b'} & {z}\ar@/^1pc/[r]^{b''} & {\bullet} \ar@(ur,dr)^{d} }
$$
with $\deg(b')=1$ and $\deg(b'')=\deg(b)-1$.
\end{example}

Let $Q$ be a weighted quiver and $Q'$ the associated quiver constructed above. Given a path $p=a_1\cdots a_m$ in $Q$, let $f(p)$ be the path in $Q'$ which is obtained by replacing every occurrence of $b$ with $b'b''$ while leaving the path unchanged if there is no occurrence of $b$. For the quiver in example \ref{ex.ex1},
$$
f(a^2bd)=a^2b'b''d
$$ 
while
$$
f(acd)=acd.
$$
As $\deg(b'b'')=\deg(b)$, the map $f$ preserves the degree of paths. Hence, $f$ determines a graded $k$-linear map $f:kQ \to kQ'$ which can be seen to respect multiplication.

\subsection{} Let $Q$ be a weighted quiver and $Q'$ the associated quiver as in section \ref{sec.secmain}. Given a graded representation $M \in \Gr kQ$, let $F(M)$ be the following graded representation in $\Gr kQ'$:

For the vertices;
\begin{itemize}
\item $F(M)_v:=M_v$ for all $v\in Q'_1\-\{z\}$,
\item $F(M)_z:=M_{s(b)}(-1)$,
\end{itemize}
while for the arrows;
\begin{itemize}
\item $F(M)_a:=M_a$ for all $a\in Q'_1\-\{b',b''\}$,
\item $F(M)_{b'}:=\id:M_{s(b)} \to M_{s(b)}(-1)$ considered a linear map of degree $1$,
\item $F(M)_{b''}:=M_b:M_{s(b)}(-1) \to M_{t(b)}$ considered a linear map of degree $\deg(b)-1$.
\end{itemize}

Given a morphism $\phi:M \to M'$ in $\Gr kQ$, define $F(\phi):F(M) \to F(M')$ by
\begin{itemize}
\item $F(\phi)_v:=\phi_v$ for all $v\in Q'_0\-\{z\}=Q_0$, and
\item $F(\phi)_z:=\phi_{s(b)}(-1):M_{s(b)}(-1) \to M'_{s(b)}(-1)$.
\end{itemize}

It is shown in \cite{HG} that $F:\Gr kQ \to \Gr kQ'$ is an exact functor for which 
$$
F(M(1))\cong F(M)(1).
$$

Let $p=a_1\cdots a_m$ be a path in $Q$ and $f(p)$ the associated path in $Q'$. From the definition of the functor $F$,
$$
F(M)_{f(p)}=M_p.
$$
To see this, note $f(p)=f(a_1)\cdots f(a_m)$ so 
$$
F(M)_{f(p)}=F(M)_{f(a_m)}\cdots F(M)_{f(a_1)}.
$$ 
If $a_i\neq b$, then $f(a_i)=a_i$ and thus $F(M)_{f(a_i)}=F(M)_{a_i}=M_{a_i}$. If $a_i=b$, then $f(a_i)=b'b''$ and thus $F(M)_{f(a_i)}=F(M)_{b'b''}=F(M)_{b''}F(M)_{b'}=M_b\circ \id=M_b$. Hence, if $\rho=\sum \alpha_ip_i$ is a linear combination of paths with the same source and target, then
$$
F(M)_{f(\rho)}=\sum \alpha_iF(M)_{f(p_i)}=\sum \alpha_iM_{p_i}=M_{\rho}.
$$

Let $I=(\rho_1,\ldots,\rho_n)\subset kQ$ be a homogeneous ideal. As before, 
$$
\rho_i=\sum_{j=1}^m \alpha_jp_j
$$ 
is a linear combination of paths of the same degree such that $s(p_j)=s(p_{j'})$ and $t(p_j)=t(p_{j'})$ for all pairs $(j,j')$. 

Suppose $M\in \Gr kQ/I$. For all $\rho_i\in I$, $M_{\rho_i}=0$. Hence, for the representation $F(M)$, $F(M)_{f(\rho_i)}=M_{\rho_i}=0$ which implies $F(M) \in \Gr kQ'/I'$ where $I'$ is the ideal 
$$
I'=(f(\rho_1),\ldots,f(\rho_n)).
$$ 
Therefore, the functor $F:\Gr kQ \to \Gr kQ'$ induces a functor $F:\Gr kQ/I \to \Gr kQ'/I'$.

Let $N$ be a representation of $kQ'$. Define $G(N)$ to be the following representation of $kQ$:

For the vertices,
\begin{itemize}
\item $G(N)_v:=N_v$ for all vertices $v\in Q_0=Q'_0\-\{z\}$,
\end{itemize}
while for the arrows
\begin{itemize}
\item $G(N)_a:=N_a$ for all $a\in Q_1\-\{b\}$, and
\item $G(N)_b:=N_{b''}\circ N_{b'}$ which is a linear map of degree $\deg(b''b')=\deg(b)$. 
\end{itemize}

Given a morphism $\psi:N \to N'$ in $\Gr kQ'$, define $G(\psi):G(N) \to G(N')$ by
\begin{itemize}
\item $G(\psi)_v:=\psi_v$ for all $v\in Q_0=Q'_0\-\{z\}$.
\end{itemize}

$G$ is a functor $\Gr kQ' \to \Gr kQ$.

Let $N$ be a representation in $\Gr kQ'$ and $p=a_1\cdots a_m$ a path in $Q$. Since $G(N)_b=N_{b''}N_{b'}$ and $G(N)_a=N_a$ for $a\in Q_1\-\{b\}$, it follows that
$$
G(N)_p=N_{f(p)}
$$
and more generally,
$$
G(N)_{\rho}=N_{f(\rho)}
$$
for any linear combination of paths with the same source and target. Hence, if $N$ is a representation in $\Gr kQ'/I'$, then for all $\rho_i\in I$,
$$
G(N)_{\rho_i}=N_{f(\rho_i)}=0.
$$
Hence, the functor $G:\Gr kQ' \to \Gr kQ$ induces a functor $G:\Gr kQ'/I' \to \Gr kQ/I$.

From the definitions of $F$ and $G$, it can be seen that $GF=\id_{\Gr kQ/I}$.

Let $N\in \Gr kQ'/I'$, then the module $FG(N)$ is given by the data
\begin{itemize}
\item $FG(N)_v=N_v$ for $v\in Q'_0\-\{z\}$,
\item $FG(N)_z=N_{s(b)}(-1)$,
\item $FG(N)_a=N_a$ for all $a\in Q'_1\-\{b',b''\}$,
\item $FG(N)_{b'}=\id:N_{s(b)} \to N_{s(b)}(-1)$ considered a degree one linear map,
\item $FG(N)_{b''}=N_{b''}\circ N_{b'}:N_{s(b)}(-1) \to N_{t(b)}$.
\end{itemize}

For each $N\in \Gr kQ'/I'$, define $\epsilon_N:FG(N) \to N$ by $(\epsilon_N)_v=\id$ for $v\neq z$ and $(\epsilon_N)_z=N_{b'}$ considered as a degree zero map from $FG(N)_z=N_{s(b)}(-1) \to N_z$.

\begin{Prop}
The assignment $N\mapsto \epsilon_N$ is a natural transformation $\epsilon:FG \to \id_{\Gr kQ'/I'}$. Let $\eta:\id_{\Gr kQ/I} \to GF$ be the identity natural transformation. Then $F$ is left adjoint to $G$ with unit $\eta$ and counit $\epsilon$.
\end{Prop}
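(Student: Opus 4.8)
The plan is to verify the three assertions directly and essentially by bookkeeping, since everything is local: I would check naturality and the adjunction axioms vertex-by-vertex and arrow-by-arrow on representations. The statement ``$\eta=\id$'' is legitimate only because $GF=\id_{\Gr kQ/I}$ was already established, so $\eta$ is automatically a (trivially natural) transformation $\id\to GF$; the real content is that $\epsilon$ is natural and that $(\eta,\epsilon)$ satisfies the two triangle identities. The only place where anything nontrivial happens is the new vertex $z$, and the sole subtlety throughout is keeping track of the degree shift by $(-1)$ there.

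First I would confirm that each $\epsilon_N$ is genuinely a morphism in $\GrRep Q'$, i.e. that it intertwines the structure maps. For every arrow $a\in Q'_1\-\{b',b''\}$ both endpoints lie in $Q_0$, where $\epsilon_N$ is the identity and $FG(N)_a=N_a$, so the compatibility square commutes trivially. The only arrows meeting $z$ are $b'$ and $b''$, and for these the squares reduce to $N_{b'}\circ\id=N_{b'}$ and $\id\circ(N_{b''}N_{b'})=N_{b''}\circ N_{b'}$, once one reads $(\epsilon_N)_z=N_{b'}$ as the claimed degree-zero map $N_{s(b)}(-1)\to N_z$. Since both $FG(N)$ and $N$ lie in $\Gr kQ'/I'$, such a morphism of $\GrRep Q'$ is automatically a morphism of the subcategory, so $\epsilon_N$ lands in $\Gr kQ'/I'$.

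Next, naturality. For a morphism $\psi:N\to N'$ and any vertex $v\neq z$, both composites equal $\psi_v$, because $\epsilon$ is the identity there and $FG(\psi)_v=\psi_v$. At $v=z$ one has $FG(\psi)_z=\psi_{s(b)}(-1)$, so the square to be checked is $N'_{b'}\circ\psi_{s(b)}(-1)=\psi_z\circ N_{b'}$; this is precisely the commutativity square expressing that $\psi$ is a morphism of representations at the arrow $b'$, read after shifting the source by $-1$. Hence $\epsilon$ is natural.

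Finally, the triangle identities, both of which collapse because $\eta=\id$. For $\epsilon_F\circ F\eta=\id_F$, I note $\eta_M=\id_M$ gives $F(\eta_M)=\id_{F(M)}$, so it suffices to compute $\epsilon_{F(M)}$; at $z$ one has $(\epsilon_{F(M)})_z=(F(M))_{b'}=\id$, whence $\epsilon_{F(M)}=\id_{F(M)}$. For $G\epsilon\circ\eta_G=\id_G$, since $\eta_{G(N)}=\id_{G(N)}$ it suffices that $G(\epsilon_N)=\id_{G(N)}$, which holds because $G$ records only the data at the vertices $v\neq z$, where $\epsilon_N$ is the identity. I expect no deep obstacle here; the one thing requiring care is the shift at $z$ — in particular recognizing that $(\epsilon_N)_z$, though built from the degree-one map $N_{b'}$, becomes a degree-zero morphism after the source is shifted, and that the naturality square at $z$ is exactly the $b'$-compatibility of $\psi$.
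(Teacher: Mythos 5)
Your proof is correct, and it is worth noting that it is more self-contained than what the paper actually does: the paper's entire proof of this Proposition is a citation to Propositions 3.3 and 3.4 of \cite{HG}, which establish the adjunction for the path algebras $kQ$ and $kQ'$ themselves (the case $I=0$). Invoking that citation in the present setting implicitly requires knowing that $F$, $G$, $\eta$, $\epsilon$ all restrict correctly to the full subcategories $\Gr kQ/I$ and $\Gr kQ'/I'$ cut out by the relations --- a point the paper leaves to the earlier discussion showing $F$ and $G$ preserve these subcategories. Your argument handles this head-on: you verify that each $\epsilon_N$ is a morphism of representations (the only nontrivial squares being at $b'$ and $b''$), that naturality at $z$ is exactly the $b'$-compatibility square for $\psi$ after shifting the source, and that both triangle identities collapse because $\eta=\id$, reducing to $\epsilon_{F(M)}=\id$ (since $F(M)_{b'}=\id$ becomes the identity as a degree-zero map out of the shifted source) and $G(\epsilon_N)=\id$ (since $G$ forgets the vertex $z$). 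Your remark that fullness of the subcategory $\GrRep(Q',f(\rho_1),\ldots,f(\rho_n))$ makes $\epsilon_N$ automatically a morphism there is precisely the bookkeeping the citation glosses over. The trade-off is the usual one: the paper's proof buys brevity by outsourcing the computation, while yours gives the reader a complete verification and makes the degree-shift subtlety at $z$ explicit.
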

\begin{proof}
See Propositions $3.3$ and $3.4$ in \cite{HG}.
\end{proof}

\subsection{} 

Let $\pi^*:\Gr kQ'/I' \to \QGr kQ'/I'$ be the canonical quotient functor and $\pi_*$ it's right adjoint. Let $\sigma:\id_{\Gr kQ'/I'} \to \pi_*\pi^*$ be the unit and $\tau:\pi^*\pi_* \to \id_{\QGr kQ'/I'}$ the counit of the adjoint pair $(\pi^*,\pi_*)$. Using the adjoint pair $(F,G)$, we get the adjoint pair $(\pi^*F,G\pi_*)$ where
\begin{itemize}
\item $G\sigma F\cdot \eta:\id_{\Gr kQ/I}\to G\pi_*\circ \pi^*F$ is the unit and
\item $\tau \cdot \pi^*\epsilon \pi_*:\pi^*F \circ G\pi_* \to \id_{\QGr kQ'/I'}$ is the counit.
\end{itemize} 

As $\pi^*$ and $F$ are exact so is $\pi^*F$.

\begin{lemma}
The kernel of $\pi^*F:\Gr kQ/I \to \QGr kQ'/I'$ is 
$$
\Ker \pi^*F=\Fdim kQ/I.
$$ 
\end{lemma}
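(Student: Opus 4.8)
The plan is to identify the kernel concretely and then show it coincides with $\Fdim kQ/I$ by transporting the ``sum of finite-dimensional submodules'' condition back and forth along $F$ and $G$. First I would unwind the definition of the kernel. Since $\pi^*F$ is a composite of exact functors it is exact, so its kernel is the Serre subcategory
$$
\Ker \pi^*F=\{M\in \Gr kQ/I : \pi^*F(M)=0\}=\{M : F(M)\in \Ker \pi^*\}.
$$
By construction of the quotient category, $\Ker \pi^*=\Fdim kQ'/I'$. Hence the lemma is equivalent to the statement that $F(M)\in \Fdim kQ'/I'$ if and only if $M\in \Fdim kQ/I$.

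Next I would record the homological properties of $F$ and $G$ needed for the argument. The functor $F$ is exact (stated above) and, being a left adjoint to $G$, preserves all colimits, in particular direct sums and filtered colimits. The functor $G$ is computed vertex-wise: it discards the vector space at $z$, keeps the remaining vertex spaces unchanged, and sets the action of $b$ to $N_{b''}N_{b'}$. Since direct sums, filtered colimits, kernels, and cokernels of graded representations are all computed vertex-wise, $G$ is exact and preserves direct sums and filtered colimits. I would also note the elementary dimension bounds
$$
\dim_k M\le \dim_k F(M)=\dim_k M+\dim_k M_{s(b)}\le 2\dim_k M,\qquad \dim_k G(N)\le \dim_k N,
$$
so that $F$ preserves finite-dimensionality in both directions, while $G$ sends finite-dimensional modules to finite-dimensional modules.

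For the forward inclusion, suppose $M\in \Fdim kQ/I$, so that $M=\sum_\alpha M_\alpha$ is the sum of its finite-dimensional submodules. Applying $F$, which is exact and preserves sums, gives $F(M)=\sum_\alpha F(M_\alpha)$, a sum of submodules each of which is finite-dimensional by the dimension bound; hence $F(M)\in \Fdim kQ'/I'$. For the reverse inclusion, suppose $F(M)\in \Fdim kQ'/I'$, say $F(M)=\sum_\alpha N_\alpha$ with each $N_\alpha$ a finite-dimensional submodule. Using the identity $GF=\id_{\Gr kQ/I}$ together with the fact that $G$ is exact and preserves sums, I would compute
$$
M=GF(M)=G\Big(\sum_\alpha N_\alpha\Big)=\sum_\alpha G(N_\alpha),
$$
where each $G(N_\alpha)$ is a finite-dimensional submodule of $M$. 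Therefore $M\in \Fdim kQ/I$, completing both inclusions.

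The step I expect to require the most care is verifying that $G$ genuinely preserves the relevant ``sum of submodules'' structure in the reverse inclusion: one must check that exactness of $G$ makes each $G(N_\alpha)\hookrightarrow M$ a monomorphism, and that preservation of filtered colimits (equivalently, of arbitrary sums of submodules, since these are filtered colimits of finite sub-sums, which are images of finite coproducts) yields the genuine equality $\sum_\alpha G(N_\alpha)=M$ rather than merely a submodule inclusion. Everything else reduces to the dimension bounds together with the formal properties of the adjunction $(F,G)$ already established.
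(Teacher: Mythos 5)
Your proof is correct: both inclusions follow, as you show, from exactness of $F$ and $G$, the vertex-wise dimension bounds, preservation of sums of submodules, and the identity $GF=\id_{\Gr kQ/I}$, after reducing the lemma to ``$F(M)\in\Fdim kQ'/I'$ iff $M\in\Fdim kQ/I$.'' This is essentially the same argument the paper invokes by deferring to Lemma 3.5 of \cite{HG}; your version simply makes self-contained the details that the citation leaves implicit.
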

\begin{proof}
Same as the proof of Lemma $3.5$ in \cite{HG}. 
\end{proof}

\begin{Prop} \label{prop.prop2}
For every module $N\in \Gr kQ'/I'$, $\pi^*(\epsilon_N)$ is an isomorphism.
\end{Prop}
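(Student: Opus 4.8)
The plan is to invoke the standard criterion (Gabriel localization) that a morphism in $\Gr kQ'/I'$ becomes an isomorphism after applying $\pi^*$ precisely when both its kernel and its cokernel lie in the localizing subcategory $\Fdim kQ'/I'$. This reduces the proposition to showing that $\ker\epsilon_N$ and $\coker\epsilon_N$ are each the sum of their finite-dimensional submodules.

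First I would compute these kernels and cokernels vertex by vertex. Kernels and cokernels of morphisms of representations are formed componentwise, and from the explicit description of $\epsilon_N$ the component $(\epsilon_N)_v$ is the identity for every vertex $v\neq z$ and equals $N_{b'}$ at $v=z$. It follows at once that $(\ker\epsilon_N)_v=0$ and $(\coker\epsilon_N)_v=0$ for all $v\neq z$, while
$$
(\ker\epsilon_N)_z=\ker(N_{b'}) \qquad\text{and}\qquad (\coker\epsilon_N)_z=N_z/\im(N_{b'}).
$$
Hence both $\ker\epsilon_N$ and $\coker\epsilon_N$ are representations concentrated at the single new vertex $z$.

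The key step is then to observe that any representation $M$ concentrated at $z$ lies in $\Fdim kQ'/I'$. The only arrows of $Q'$ incident to $z$ are $b'$ and $b''$, whose remaining endpoints are $s(b)$ and $t(b)$; since $z\notin Q_0$ we have $s(b),t(b)\neq z$, so $z$ carries no loop and each of $M_{b'}$ and $M_{b''}$ has either a zero source or a zero target. Thus every arrow acts as zero on $M$, its module structure reduces to that of the graded vector space $M_z$, and every graded subspace is a submodule. Consequently $M$ is the sum of its one-dimensional graded pieces, each a finite-dimensional submodule, so $M\in\Fdim kQ'/I'$. Applying this to $\ker\epsilon_N$ and $\coker\epsilon_N$ and feeding the result into the criterion yields that $\pi^*(\epsilon_N)$ is an isomorphism.

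The argument is essentially formal once the componentwise description of $\epsilon_N$ is in hand; the only point requiring genuine care --- and the crux of the matter --- is the verification that a representation supported at $z$ carries no nontrivial module structure, which rests on the fact that the construction of $Q'$ attaches no loop at $z$. Before placing $\ker\epsilon_N$ and $\coker\epsilon_N$ in $\Fdim kQ'/I'$ I would also note explicitly that, since all arrow maps vanish, every path map on these representations is zero, so the relations in $I'$ are trivially satisfied and they do indeed lie in $\Gr kQ'/I'$.
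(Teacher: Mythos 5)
Your proposal is correct and takes essentially the same route as the paper's own proof: both reduce via the localization criterion to showing that $\Ker \epsilon_N$ and $\Coker \epsilon_N$ are supported only at the new vertex $z$, where every arrow acts as zero, hence lie in $\Fdim kQ'/I'$. One small wording slip: the graded pieces of a module supported at $z$ need not be one-dimensional (or even finite-dimensional); what your argument actually shows is that the span of each homogeneous element is a one-dimensional submodule, and that is what suffices.
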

\begin{proof}
For each vertex $v\in Q'_0\-\{z\}$, $\epsilon_N=\id_{N_v}$. Hence, $(\Ker \epsilon_N)_v$ and $(\Coker \epsilon_N)_v$ are zero for all vertices $v\in Q'_0\-\{z\}$. Hence, the modules $\Ker \epsilon_N$ and $\Coker \epsilon_N$ are supported only on the vertex $z$. Thus, every arrow acts trivially on $\Ker \epsilon_N$ and $\Coker \epsilon_N$ showing they are both in $\Fdim kQ'/I'$. Hence, the map $\pi^*(\epsilon_N)$ is an isomorphism.
\end{proof}

\begin{Thm} \label{thm.main2}
The functor $\pi^*F:\Gr kQ/I \to \QGr kQ'/I'$ induces an equivalence of categories
$$
\QGr kQ/I \equiv \QGr kQ'/I'.
$$
\end{Thm}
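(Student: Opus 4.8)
The plan is to realize $\pi^{*}F$ as the Gabriel quotient functor for $\Fdim kQ/I$ and then produce an explicit quasi-inverse. Write $q\colon \Gr kQ/I\to\QGr kQ/I$ and $q'=\pi^{*}\colon\Gr kQ'/I'\to\QGr kQ'/I'$ for the two quotient functors. Since $\pi^{*}F$ is exact and, by the Lemma, $\Ker(\pi^{*}F)=\Fdim kQ/I$, the universal property of the Serre quotient produces a unique exact functor $\overline{F}\colon\QGr kQ/I\to\QGr kQ'/I'$ with $\overline{F}q\cong q'F=\pi^{*}F$; proving the theorem is the same as proving $\overline{F}$ is an equivalence.

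First I would descend $G$ as well. The functor $G$ is exact — this is clear from its description, since $G$ is computed vertexwise at the vertices $v\ne z$ and only composes $N_{b''}$ with $N_{b'}$ — and it carries $\Fdim kQ'/I'$ into $\Fdim kQ/I$, because any homogeneous element of $G(N)$ lies in some $N_{v}$ with $v\ne z$, and a finite dimensional submodule of $N$ containing it restricts to a finite dimensional submodule of $G(N)$. Hence $qG$ is exact and annihilates $\Fdim kQ'/I'$, so it factors as $\overline{G}q'$ for a unique exact $\overline{G}\colon\QGr kQ'/I'\to\QGr kQ/I$. (Equivalently, $\overline{G}$ may be realized as $qG\pi_{*}$.)

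Then I would check the two composites, each direction using one of the facts already in hand. For $\overline{G}\,\overline{F}$ the identity $GF=\id_{\Gr kQ/I}$ gives
\[
\overline{G}\,\overline{F}\,q\cong\overline{G}\,q'F=qGF=q,
\]
and since natural transformations descend uniquely along the quotient functor $q$, this promotes to $\overline{G}\,\overline{F}\cong\id_{\QGr kQ/I}$. For $\overline{F}\,\overline{G}$ the Proposition says $\pi^{*}(\epsilon_{N})$ is an isomorphism for every $N$, so $q'\epsilon\colon q'FG\to q'$ is a natural isomorphism; together with $\overline{F}\,\overline{G}\,q'\cong q'FG$ this yields $\overline{F}\,\overline{G}\,q'\cong q'$ and hence $\overline{F}\,\overline{G}\cong\id_{\QGr kQ'/I'}$ after descending along $q'$. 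Thus $\overline{F}$ and $\overline{G}$ are mutually quasi-inverse.

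The substantive content is already packaged in the Lemma (the kernel is \emph{exactly} $\Fdim kQ/I$), in $GF=\id$, and in the Proposition, so the main obstacle is the localization bookkeeping rather than any new computation. The two points that genuinely need care are that $G$ is exact and $\Fdim$-preserving, which is what licenses the descent of $G$, and that a natural isomorphism established after precomposition with $q$ or $q'$ really does descend to the induced functors on the quotient categories. One may instead run the argument entirely through the adjoint pair $(\pi^{*}F,G\pi_{*})$: its counit $\tau\cdot\pi^{*}\epsilon\pi_{*}$ is a natural isomorphism, since $\pi^{*}\epsilon$ is one by the Proposition and $\tau$ is one for the localization, so $G\pi_{*}$ is fully faithful; an exact functor whose kernel is $\Fdim kQ/I$ and which admits a fully faithful right adjoint is, up to equivalence, the quotient functor onto its target, and this is the formulation that most directly reuses the adjunction built in the previous subsection.
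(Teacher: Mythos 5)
Your proof is correct, but it takes a genuinely different route from the paper's. The paper never descends $G$ to the quotient categories: it works entirely with the adjoint pair $(\pi^*F, G\pi_*)$ built in the preceding subsection, shows its counit $\tau\cdot\pi^*\epsilon\pi_*$ is a natural isomorphism (combining Proposition \ref{prop.prop2} with Popescu's result that the counit $\tau$ of $(\pi^*,\pi_*)$ is an isomorphism), concludes that the right adjoint $G\pi_*$ is fully faithful, and then cites Popescu's theorem that an exact functor with a fully faithful right adjoint induces an equivalence from the quotient by its kernel, at which point the Lemma identifying $\Ker\pi^*F=\Fdim kQ/I$ finishes the job --- this is precisely the alternative you sketch in your closing paragraph. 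Your main argument instead builds an explicit quasi-inverse by descending both $F$ and $G$ through the Serre quotients and checking the two composites via $GF=\id$ and Proposition \ref{prop.prop2}. This costs you two verifications the paper never makes --- that $G$ is exact and that $G(\Fdim kQ'/I')\subseteq \Fdim kQ/I$, both of which you correctly dispatch since (co)kernels are computed vertexwise and finite-dimensional submodules restrict along $G$ --- together with the standard fact that functors and natural isomorphisms descend uniquely along Gabriel quotient functors. In exchange you avoid both citations to Popescu, and you actually use less than the paper does: you only need the containment $\Fdim kQ/I\subseteq\Ker\pi^*F$ to descend $F$, not the equality asserted in the Lemma, since the reverse containment is automatic once the quasi-inverse exists. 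One small omission: the paper also records that the equivalence respects shifting, which is what the induction proving Theorem \ref{thm.main} requires; your construction gives this just as easily (all of $F$, $G$, $\pi^*$ commute with shift, and the descended functors inherit this), but it should be stated.
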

\begin{proof}
As $F$ and $\pi^*$ preserve shifting, $\pi^*F$ preserves shifting.
The functor $\pi^*F$ is an exact functor with a right adjoint $G\pi_*$. For every object $\sN \in \QGr kQ'/I'$, the map $\pi^*(\epsilon_{\pi_*\sN})$ is an isomorphism by Proposition \ref{prop.prop2}. By \cite[Prop. 4.3, pg. 176]{Pop}, the counit $\tau$ of the adjoint pair $(\pi^*,\pi_*)$ is a natural isomorphism. Hence, the counit $\tau \cdot \pi^*\epsilon \pi_*$ is a natural isomorphism as
$$
(\tau \cdot \pi^*\epsilon \pi_*)_{\sN}=\tau_{\sN}\circ \pi^*(\epsilon_{\pi_*\sN})
$$ 
is a composition of isomorphisms for all $\sN \in \QGr kQ'/I'$.

Thus, the right adjoint $G\pi_*$ is fully faithful. By \cite[Theorem 4.9, pg. 180]{Pop}, $\pi^*F$ induces an equivalence
$$
\frac{\Gr kQ/I}{\Ker \pi^*F} \equiv \QGr kQ'/I'
$$
which preserves shifting. As $\Ker \pi^*F=\Fdim kQ/I$, the Theorem is proved.
\end{proof}

\subsection{Proof of Theorem \ref{thm.main}.}

The proof of Theorem \ref{thm.main} now follows by induction on the weight discrepancy. If $kQ/I$ is a quotient of a weighted path algebra for which $D(Q)=0$, then every arrow in $Q$ has degree $1$ and there is nothing to prove. Suppose $D(Q)>1$ and let $b$ be an arrow in $Q$ of degree greater than $1$. Let $Q'$ be the quiver obtained from $Q$ by replacing the arrow $b$ with two arrows as in Section \ref{sec.secmain} and $I'$ the ideal obtained from the ideal $I$. By Theorem \ref{thm.main2} there is an equivalence
$$
\QGr kQ/I \equiv \QGr kQ'/I'.
$$
which respects shifting. Since $D(Q')=D(Q)-1$, we can find, by induction, a quiver $Q''$ with all arrows in degree $1$ and a homogeneous ideal $I''\subset kQ''$ such that 
$$
\QGr kQ'/I' \equiv \QGr kQ''/I''
$$
where the equivalence respects shifting. Hence, $\QGr kQ/I \equiv \QGr kQ''/I''$ via an equivalence which respects shifting. 

%%%%%%%%%%%%%%%%%%%%%%%%%%%%%%%%%%%%%%%%%%%%%%%%%%%%%%%%%

\end{document}